\documentclass[11pt]{article}
\usepackage{amsmath, amscd, amssymb, latexsym, epsfig, color, amsthm,enumerate, mathtools}
\usepackage[all]{xy}
\usepackage{verbatim}
\setlength{\textwidth}{6.5in}
\setlength{\textheight}{8.6in}
\setlength{\topmargin}{0pt}
\setlength{\headsep}{0pt}
\setlength{\headheight}{0pt}
\setlength{\oddsidemargin}{0pt}
\setlength{\evensidemargin}{0pt}
\flushbottom
\pagestyle{plain}

\numberwithin{equation}{section}

\newtheorem{theorem}{Theorem}[section]

\newtheorem{corollary}[theorem]{Corollary}
\newtheorem{conjecture}[theorem]{Conjecture}
\newtheorem{lemma}[theorem]{Lemma}

\theoremstyle{definition}

\DeclareMathOperator\lk{\mathrm{lk}}

\DeclareMathOperator\st{\mathrm{st}}

\DeclareMathOperator{\conv}{\mathrm{conv}}
\DeclareMathOperator{\supp}{\mathrm{supp}}

\newcommand{\field}{{\bf k}}
\newcommand{\N}{{\mathbb N}}
\newcommand{\R}{{\mathbb R}}

\newcommand{\Z}{{\mathbb Z}}

\newcommand{\C}{{\mathcal C}}

\newcommand{\Stress}{\mathcal S}

\title{The stresses on centrally symmetric complexes and the lower bound theorems}
\author{
	Isabella Novik\thanks{Research of IN is partially\textsl{}
		supported by NSF grants DMS-1664865 and DMS-1953815, and by Robert R.~\&  Elaine F.~Phelps Professorship in Mathematics. }\\
	\small Department of Mathematics\\[-0.8ex]
	\small University of Washington\\[-0.8ex]
	\small Seattle, WA 98195-4350, USA\\[-0.8ex]
	\small \texttt{novik@uw.edu}
	\and 
	Hailun Zheng\thanks{Research of HZ is partially supported by a postdoctoral fellowship from ERC grant 716424 - CASe.} \\
	\small Department of Mathematical Sciences\\[-0.8ex]
	\small University of Copenhagen\\[-0.8ex]
	\small Universitesparken 5, 2100 Copenhagen, Denmark \\[-0.8ex]
	\small \texttt{hz@math.ku.dk}
}
\begin{document}
\maketitle
\begin{abstract}
In 1987, Stanley conjectured that if a centrally symmetric Cohen--Macaulay simplicial complex $\Delta$ of dimension $d-1$ satisfies $h_i(\Delta)=\binom{d}{i}$ for some $i\geq 1$, then $h_j(\Delta)=\binom{d}{j}$ for all $j\geq i$. Much more recently, Klee, Nevo, Novik, and Zheng conjectured that if a centrally symmetric simplicial polytope $P$ of dimension $d$ satisfies $g_i(\partial P)=\binom{d}{i}-\binom{d}{i-1}$ for some $d/2\geq i\geq 1$, then  $g_j(\partial P)=\binom{d}{j}-\binom{d}{j-1}$ for all $d/2\geq j\geq i$.
This note uses stress spaces to prove both of these conjectures.  
\end{abstract}
\section{Introduction}
This paper is devoted to analyzing the cases of equality in Stanley's lower bound theorems on the face numbers of centrally symmetric Cohen--Macaulay complexes and centrally symmetric polytopes. All complexes considered in this paper are simplicial.

In the seventies, Stanley and Hochster (independently from each other) introduced the notion of Stanley--Reisner rings and started developing their theory, see \cite{Hochster, Reisner, Stanley75, Stanley77}. In the fifty years since, this theory has become a major tool in the study of face numbers of simplicial complexes that resulted in a myriad of theorems and applications. Among them are a complete characterization of face numbers of Cohen--Macaulay (CM, for short) simplicial complexes \cite{Stanley77}, a complete characterization of flag face numbers of balanced CM complexes \cite{BjorFranklSta,Stanley79}, and a complete characterization of face numbers of simplicial polytopes \cite{BilleraLee, Stanley80}, to name just a few.

A simplicial complex $\Delta$ is called centrally symmetric (or cs) if its vertex set $V$ is endowed with a free involution $\alpha: V \to  V$ that induces a free involution on the set of all non-empty faces of $\Delta$. Motivated by the desire to understand face numbers of cs simplicial polytopes as well as to find a complete characterization of face numbers of cs CM complexes, Stanley \cite[Theorems 3.1 and 4.1]{Stanley-87} proved the following Lower Bound Theorem:
\begin{theorem} \label{Stanley-LBT-v1}
	Let $\Delta$ be a $(d-1)$-dimensional cs CM simplicial complex. Then $h_i(\Delta)\geq \binom{d}{i}$ for all $1\leq i\leq d$. Furthermore, if $\Delta$ is the boundary complex of a $d$-dimensional cs simplicial polytope, then $g_i(\Delta)\geq \binom{d}{i}-\binom{d}{i-1}$ for all $1\leq i\leq d/2$.
\end{theorem}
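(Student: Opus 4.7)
The plan is to work inside an Artinian reduction of the Stanley--Reisner ring $\field[\Delta]$ chosen compatibly with the antipodal action. Extend $\alpha$ to a graded ring involution $\hat{\alpha}$ of $\field[\Delta]$ by $\hat{\alpha}(x_v) = x_{\alpha(v)}$ (over $\field$ of characteristic $\neq 2$), and decompose each graded piece into $\hat{\alpha}$-eigenspaces $\field[\Delta]_k = \field[\Delta]_k^+ \oplus \field[\Delta]_k^-$. Since $\alpha$ acts freely on $V$ and on all nonempty faces, the antisymmetric part $\field[\Delta]_1^-$ is spanned by the $|V|/2 \geq d$ differences $x_v - x_{\alpha(v)}$, so generic elements $\theta_1, \ldots, \theta_d \in \field[\Delta]_1^-$ form an l.s.o.p. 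The CM hypothesis yields $\dim_\field A_k = h_k(\Delta)$ for $A := \field[\Delta]/(\theta_1, \ldots, \theta_d)$, and $\hat{\alpha}$ descends to $A$ (since $\hat{\alpha}(\theta_j) = -\theta_j$ leaves the ideal invariant), giving $A_k = A_k^+ \oplus A_k^-$.

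For the first assertion, the plan is to show $\dim A_i^+ \geq \binom{d}{i}$, which immediately gives $h_i(\Delta) \geq \binom{d}{i}$. The benchmark is the boundary of the cross-polytope $\partial\beta_d$, whose Artinian reduction is isomorphic to $\field[x_1,\ldots,x_d]/(x_1^2,\ldots,x_d^2)$ with $\hat{\alpha}$ acting trivially, so all $\binom{d}{k}$ dimensions reside in $A_k^+$. Following the stress-space perspective developed in this paper, one constructs, for each $i$-subset $S\subseteq[d]$, an explicit $\hat{\alpha}$-symmetric $i$-stress $\sigma_S\in\Stress_i(\Delta)$ --- a linear functional on $\field[\Delta]_i^+$ vanishing on $\sum_j\theta_j\,\field[\Delta]_{i-1}^-$ --- supported on cs configurations of $(i-1)$-faces indexed by $S$. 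The main technical obstacle is to show that the $\binom{d}{i}$ stresses $\{\sigma_S\}$ are linearly independent in $(A_i^+)^*$; this is the combinatorial heart of the cs stress-space machinery.

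For the second assertion, assume $\Delta = \partial P$ with $P$ a cs simplicial polytope. The Hard Lefschetz Theorem provides a generic $\omega \in A_1$ such that multiplication $\omega: A_{i-1}\hookrightarrow A_i$ is injective for every $i\leq d/2$, so $g_i(\Delta) = \dim\bigl(A_i/\omega A_{i-1}\bigr)$. Taking $\omega$ generic in the $\hat{\alpha}$-symmetric subspace $\field[\Delta]_1^+$, it preserves $\hat{\alpha}$-parity, and HL refines to injections $\omega: A_{i-1}^\pm\hookrightarrow A_i^\pm$; consequently
\[g_i(\Delta) = \dim\bigl(A_i^+/\omega A_{i-1}^+\bigr) + \dim\bigl(A_i^-/\omega A_{i-1}^-\bigr) \geq \dim\bigl(A_i^+/\omega A_{i-1}^+\bigr).\]
To finish, I would argue that the images of the $\binom{d}{i}$ stresses $\sigma_S$ in the quotient $A_i^+/\omega A_{i-1}^+$ span a subspace of dimension at least $\binom{d}{i}-\binom{d}{i-1}$, the missing $\binom{d}{i-1}$ dimensions being precisely those absorbed by the images $\omega\sigma_T$ for $|T|=i-1$. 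The main additional obstacle is this parity-sensitive accounting between Lefschetz images and cs stresses, which builds on the explicit stress-space structure from the first part.
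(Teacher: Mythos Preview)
Your setup is correct and matches Stanley's argument (which the paper cites rather than re-proves): take an l.s.o.p.\ $\Theta\subset\R[\Delta]_1^-$, form the $\Z/2\Z$-graded Artinian reduction $A=\R[\Delta]/(\Theta)$, and in the polytope case further quotient by a symmetric Lefschetz element. The gap is the step where you ``construct, for each $i$-subset $S\subseteq[d]$, an explicit symmetric $i$-stress $\sigma_S$.'' For a general cs CM complex $\Delta$ there is no natural way to do this: the set $[d]$ indexes the linear forms $\theta_1,\dots,\theta_d$, not any faces of $\Delta$, and $\Delta$ need not contain a cross-polytope (or any other canonical configuration) that would supply stresses labeled by $S\subseteq[d]$. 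Indeed, that $\Delta$ contains $\partial\C^*_d$ in the equality case is a \emph{consequence} of the paper's main theorem, not an input available here. So the ``combinatorial heart'' you defer is essentially the entire content of the inequality.

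Stanley's actual argument is not a construction but a character computation. Since $\alpha$ acts freely on nonempty faces, positive-degree monomials of $\R[\Delta]$ pair off under $\alpha$, giving
\[
\sum_{i\ge 0}\bigl(\dim\R[\Delta]_i^{+}-\dim\R[\Delta]_i^{-}\bigr)\,t^i \;=\; 1.
\]
Each $\theta_j\in\R[\Delta]_1^{-}$ is a nonzerodivisor (by CM-ness) and swaps parity, so modding out by it multiplies this signed Hilbert series by $(1+t)$; after all $d$ steps one obtains $\dim A_i^{+}-\dim A_i^{-}=\binom{d}{i}$ \emph{exactly}. Combined with $\dim A_i^{+}+\dim A_i^{-}=h_i(\Delta)$ and $\dim A_i^{-}\ge 0$, this yields the bound along with the precise formula $\dim A_i^{-}=\tfrac12\bigl(h_i(\Delta)-\binom{d}{i}\bigr)$ that the paper later exploits. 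The polytope statement then drops out of your symmetric-Lefschetz setup with no further stress-counting: $\dim(A/\omega A)_i^{-}=\dim A_i^{-}-\dim A_{i-1}^{-}=\tfrac12\bigl(g_i(\Delta)-\binom{d}{i}+\binom{d}{i-1}\bigr)\ge 0$.
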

\noindent These inequalities are sharp: indeed, the boundary complex of the $d$-cross-polytope has $h_i=\binom{d}{i}$ for all $i$ and $g_i=\binom{d}{i}-\binom{d}{i-1}$ for all $1\leq i\leq d/2$. Stanley also proposed the following conjecture \cite[Conjecture 3.5]{Stanley-87}, which he verified in the case that $j$ is even or $j-i$ is even:
\begin{conjecture}\label{conj}
	Let $\Delta$ be a $(d-1)$-dimensional cs CM  simplicial complex. Suppose $h_i(\Delta)=\binom{d}{i}$ for some $i\geq 1$. Then $h_j(\Delta)=\binom{d}{j}$ for all $j\geq i$.
\end{conjecture}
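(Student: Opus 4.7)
Our plan is to pass to a symmetric Artinian reduction of the Stanley--Reisner ring of $\Delta$ and propagate the equality $h_i=\binom{d}{i}$ to higher degrees via the induced eigenspace decomposition and a stress-space argument. Let $\field$ be an infinite field, write $\field[\Delta]$ for the Stanley--Reisner ring, and let $\alpha^*$ denote the graded involution of $\field[\Delta]$ induced by the free vertex involution $\alpha$. I would choose a generic l.s.o.p.\ $\Theta=(\theta_1,\dots,\theta_d)$ lying in the $(-1)$-eigenspace of $\alpha^*$ on $\field[\Delta]_1$; such $\Theta$ exists because $|V|/2\geq d$ by the inequality $h_1\geq d$ from Theorem~\ref{Stanley-LBT-v1}. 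Put $A=\field[\Delta]/(\Theta)$, so that $\dim_\field A_j=h_j(\Delta)$ by the Cohen--Macaulay hypothesis. Since $\alpha^*$ negates each $\theta_i$, it preserves $(\Theta)$ and descends to an involution $\beta$ on $A$, giving a grading-compatible decomposition $A_j=A_j^+\oplus A_j^-$.

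A standard algebraic observation is that $A^-$ is generated as a graded module over $A^+$ by $A_1^-$, so $A_j^-=A_{j-1}^+\cdot A_1^-$. Using stress spaces, I would identify a distinguished ``cross-polytopal'' subspace $K_j\subseteq A_j$ of dimension $\binom{d}{j}$, modeled on the degree-$j$ piece of the symmetric Artinian reduction of the face ring of the boundary of the $d$-cross-polytope (for which $A^-$ vanishes identically and $A^+$ has dimension $\binom{d}{j}$). Theorem~\ref{Stanley-LBT-v1} then asserts that this cross-polytopal subspace embeds in $A$, and the hypothesis $h_i(\Delta)=\binom{d}{i}$ forces $A_i=K_i$. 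In particular, $A_i^-=0$, equivalently $A_{i-1}^+\cdot A_1^-=0$ inside $A_i$.

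The propagation step asserts that this vanishing persists: $A_j^-=0$ for all $j\geq i$. Assuming the vanishing in degree $j$, which gives $A_{j-1}^+\cdot A_1^-=0$, one would show inductively that $A_j^+\cdot A_1^-=0$, yielding $A_{j+1}^-=0$. Meanwhile, the lower bound of Theorem~\ref{Stanley-LBT-v1} combined with $A_{j+1}^-=0$ forces $A_{j+1}^+$ to coincide with $K_{j+1}$ of dimension $\binom{d}{j+1}$, completing the inductive step and yielding $h_j(\Delta)=\binom{d}{j}$ for all $j\geq i$.

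The main obstacle is the inductive propagation $A_j^+\cdot A_1^-=0$. While $A_j^+$ contains $A_1^+\cdot A_{j-1}^+$, on which $A_1^-$ acts as zero via the hypothesis, it also contains even-degree anti-invariant products such as $A_{j-2}^+\cdot(A_1^-)^2$, whose multiples by $A_1^-$ give $A_{j-2}^+\cdot(A_1^-)^3$, which is not obviously zero from $A_i^-=0$ alone. Resolving this is where the stress-space framework and the cs Cohen--Macaulay structure must be used in an essential way, likely through a Gorenstein-type pairing on an auxiliary quotient of $A$ that identifies cross-polytopal stresses with their duals, so that the vanishing of one forces the vanishing of the other. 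Establishing this duality under only the Cohen--Macaulay hypothesis, rather than the stronger Gorenstein assumption, is the technical heart of the argument.
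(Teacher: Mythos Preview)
Your setup is essentially the one Stanley used in 1987, and it is equivalent (dually) to the paper's framework: the decomposition $A_j=A_j^+\oplus A_j^-$ matches the stress-space splitting $\Stress(\Delta,\Theta)_j=\Stress(\Delta,\Theta)_j^+\oplus\Stress(\Delta,\Theta)_j^-$, and Stanley's formula $\dim A_j^-=\tfrac12\big(h_j(\Delta)-\binom{d}{j}\big)$ is exactly what makes $h_i=\binom{d}{i}$ equivalent to $A_i^-=0$. (Your ``cross-polytopal subspace $K_j$'' is not really a separate object to be embedded: once $A_j^-=0$, the whole of $A_j^+$ has dimension $\binom{d}{j}$; there is no a priori cross-polytope subcomplex of $\Delta$ to map from --- that is a \emph{consequence} of the theorem, not an input.)

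The genuine gap is precisely the one you flag and do not close: the propagation $A_j^-=0\Rightarrow A_{j+1}^-=0$. Your decomposition gives $A_{j+1}^-=A_j^+\cdot A_1^-=(A_{j-1}^+\cdot A_1^+)\cdot A_1^-+(A_{j-1}^-\cdot A_1^-)\cdot A_1^-$; the first summand dies because $A_{j-1}^+\cdot A_1^-\subseteq A_j^-=0$, but the second summand $A_{j-1}^-\cdot(A_1^-)^2$ does not, and nothing in the CM hypothesis supplies a Gorenstein-type pairing to kill it. Indeed, Stanley's original ring-theoretic argument handled only the cases $j$ even or $j-i$ even for exactly this reason, and the conjecture remained open for over thirty years; invoking an unspecified duality here is not a proof.

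The paper resolves this by passing to the dual side, where one has \emph{derivations} rather than multiplications. For an $(i+1)$-stress $w$, every $\partial_{x_v}w$ is an $i$-stress and hence symmetric. The decisive step --- which has no obvious ring-side analogue --- is to show that the second partials $\partial_{x_{u_2}}\partial_{x_{u_1}}w$ are also symmetric: one verifies that
\[
w'=(x_{u_1}+x_{-u_1}-x_{u_2}-x_{-u_2})\cdot\partial_{x_{u_2}}\partial_{x_{u_1}}w
\]
is again an $i$-stress (its support lands in $\Delta$ via a link argument, and it is annihilated by each $\partial_{\theta_k}$ because $\theta_k\in\R[X]_1^-$), hence symmetric, which forces $\partial_{x_{u_2}}\partial_{x_{u_1}}w$ to be symmetric. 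From there, elementary lemmas show $w$ is a squarefree polynomial in the variables $y_k=x_k+x_{-k}$, hence symmetric. This multiplication-by-a-symmetric-linear-form trick inside the stress space is the missing idea; your outline does not supply it or an equivalent.
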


Much more recently, Klee, Nevo, Novik, and Zheng \cite[Conjecture 8.5]{KNNZ} posited a conjecture that is similar in spirit, which they verified for $i=2$ (the case of $i=1$ is very easy):
 \begin{conjecture}\label{conj2}
	Let $\Delta$ be the boundary complex of a $d$-dimensional cs simplicial polytope. Suppose $g_i(\Delta)=\binom{d}{i}-\binom{d}{i-1}$ for some $d/2\geq i\geq 1$. Then $g_j(\Delta)=\binom{d}{j}-\binom{d}{j-1}$ for all $d/2\geq j\geq i$.
\end{conjecture}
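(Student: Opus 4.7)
The plan is to deploy stress spaces of cs simplicial polytopes together with the $\pm$-decomposition induced by the involution $\alpha$. Write $\Delta=\partial P$. Fix a generic cs geometric realization of $\Delta$, so that the $i$-th stress space $\Stress_i(\Delta)$ has dimension $g_i(\Delta)$ for $1\leq i\leq d/2$. The involution $\alpha$ acts on $\Stress_i(\Delta)$ and decomposes it as $\Stress_i(\Delta)=\Stress_i^+(\Delta)\oplus \Stress_i^-(\Delta)$. A direct calculation for the cross-polytope shows that $\Stress_i(\X_d)$ has dimension $\binom{d}{i}-\binom{d}{i-1}$ and lies entirely in the component of parity $(-1)^i$ (e.g., via $\field[\X_d]/\Theta \cong \field[x_1,\dots,x_d]/(x_j^2)$, on which $\alpha$ acts as $x_j\mapsto -x_j$).

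The first and main step is to prove a parity-refined cs Lower Bound Theorem: for every cs simplicial $d$-polytope $P$ and every $1\leq i\leq d/2$ one constructs an explicit $\alpha$-equivariant injection
\[
\iota:\Stress_i(\X_d)\hookrightarrow \Stress_i^{(-1)^i}(\Delta).
\]
Using the antipodal pairing on $V(\Delta)$, $\iota$ sends each cross-polytope stress to an alternating sum (over choices of antipodes) of its natural pullbacks in $\Delta$. The technical heart of the argument is proving linear independence of the images, which is checked against the stress relations coming from the generic cs embedding. This both recovers Stanley's cs LBT and, crucially, refines it to $\dim\Stress_i^{(-1)^i}(\Delta)\geq \binom{d}{i}-\binom{d}{i-1}$.

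Assume now $g_i(\Delta)=\binom{d}{i}-\binom{d}{i-1}$ for some $d/2\geq i\geq 1$. The refined inequality forces $\Stress_i^{(-1)^{i+1}}(\Delta)=0$ and $\Stress_i^{(-1)^i}(\Delta)=\iota(\Stress_i(\X_d))$. The final step is propagation by induction on $j\geq i$. Choose $\Theta$ cs and $\omega$ a generic antisymmetric linear form, so that multiplication by $\omega$ on the Gorenstein algebra $\field[\Delta]/\Theta$ swaps the $\pm$-components and, by generic hard Lefschetz, gives injections between successive graded pieces for $j\leq d/2$. Keeping track of the parity, the vanishing $\Stress_j^{(-1)^{j+1}}(\Delta)=0$ at level $j$ implies the corresponding vanishing at level $j+1$; combined with the parity-refined lower bound at level $j+1$, this yields $g_{j+1}(\Delta)=\binom{d}{j+1}-\binom{d}{j}$.

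The main obstacle is the parity-refined lower bound: proving linear independence of the explicit cross-polytope stresses inside $\Stress_i(\Delta)$ is substantially stronger than Stanley's original cs LBT, and requires a careful analysis of how the generic cs parameters interact with the antipodal pairing on vertices. Once this refinement is in place, the equality case and the propagation step follow from standard Lefschetz theory combined with parity bookkeeping.
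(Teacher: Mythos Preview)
Your proposal has two genuine gaps, and the overall strategy diverges from the paper's.

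\textbf{The parity computation is incorrect.} For the canonical (antisymmetric) l.s.o.p.\ $\theta_k=x_k-x_{-k}$ on $\partial\C^*_d$, the relation $x_k\equiv x_{-k}$ holds in $\R[\partial\C^*_d]/\Theta\cong\R[x_1,\dots,x_d]/(x_j^2)$, so $\alpha$ acts \emph{trivially} on the quotient, not by $x_j\mapsto -x_j$. Hence all cross-polytope stresses lie in the $+$-eigenspace, independently of $i$; the ``parity $(-1)^i$'' picture only arises if you take a symmetric l.s.o.p., but then you are no longer in Stanley's setting and his formula $\dim\R(\Delta,\widetilde\Theta)^-_i=\tfrac12\big(g_i(\Delta)-\binom{d}{i}+\binom{d}{i-1}\big)$ (which is exactly your ``parity-refined cs LBT'') is not available. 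In the correct convention the equality hypothesis is equivalent to $\Stress(\Delta,\widetilde\Theta)^-_i=0$, and the goal is to show $\Stress(\Delta,\widetilde\Theta)^-_{i+1}=0$.

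\textbf{The Lefschetz propagation step fails.} A generic \emph{antisymmetric} linear form need not be Lefschetz: for $\partial\C^*_d$ itself, every antisymmetric linear form already lies in $(\theta_1,\dots,\theta_d)$, so multiplication by any such $\omega$ is the zero map on $\R(\partial\C^*_d,\Theta)$, yet $h_1=d>0$. Thus the very polytope you are comparing to is a counterexample to the claimed Lefschetz property. Even setting this aside, your logical step is in the wrong direction: Lefschetz gives \emph{injections} $\times\omega:R_j\hookrightarrow R_{j+1}$, and knowing that a parity piece vanishes in the source says nothing about the target. To deduce vanishing in degree $j+1$ from vanishing in degree $j$ you would need a parity-respecting \emph{surjection} $R_j\twoheadrightarrow R_{j+1}$ (equivalently, on stress spaces, an injection $\Stress_{j+1}\hookrightarrow\Stress_j$ that respects parity), and no such map is supplied.

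\textbf{How the paper proceeds instead.} The paper does not use Lefschetz for propagation. It shows directly that if every $i$-stress is symmetric then every $(i+1)$-stress $w$ is symmetric, by analyzing the partial derivatives $\partial_{x_v}w$ (which are symmetric $i$-stresses living on $\st(v)$, hence on $\lk(v)\cap\lk(-v)$). From this one deduces that $w$ is squarefree, then that each $\partial_{x_v}w$ is a squarefree polynomial in the symmetric variables $y_k=x_k+x_{-k}$, and finally that $w$ itself is such a polynomial (Lemmas~3.1--3.4 and Theorem~3.5). The equality $g_j(\Delta)=\binom{d}{j}-\binom{d}{j-1}$ for all $j\ge i$ then follows from Stanley's formula. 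The parity-refined lower bound you aim to build by an explicit injection is precisely Stanley's Theorem~2.1, which the paper takes as input rather than re-proving.
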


In this note we prove both conjectures in full generality. The proofs are given in Section 3. 
Along the way, we show that any complex $\Delta$ satisfying conditions of Conjecture \ref{conj} contains the boundary complex of a $d$-cross-polytope as a subcomplex --- a fact that might be of independent interest. Our proof utilizes the theory of stress spaces developed by Lee \cite{Lee96}. Specifically, the $h$-numbers of a Cohen--Macaulay complex $\Delta$ can be viewed as the dimensions of certain spaces of linear stresses on $\Delta$ while the $g$-numbers of the boundary complex of a simplicial polytope are the dimensions of spaces of affine stresses. A key observation is that if $\Delta$ is a $(d-1)$-dimensional {\em cs} CM complex, then $h_i(\Delta)=\binom{d}{i}$ if and only if all linear $i$-stresses on $\Delta$ are symmetric; similarly, if $\Delta$ is the boundary complex of a $d$-dimensional {\em cs} simplicial polytope, then $g_i(\Delta)=\binom{d}{i}-\binom{d}{i-1}$ if and only if all affine $i$-stresses on $\Delta$ are symmetric, see the discussion in Section 2. Both conjectures then follow from the main result of the paper asserting that for an arbitrary cs simplicial complex $\Delta$, if $\Theta$ is a set of linear forms satisfying certain conditions and if for some $i>1$, all $i$-stresses on $\Delta$ computed w.r.t.~$\Theta$ are symmetric, then so are all $j$-stresses on $\Delta$ for any $j\geq i$, see Theorem \ref{main theorem prep}.

\section{Setting the stage}
We review several definitions and results on simplicial complexes, Stanley--Reisner rings, stress spaces, and Cohen--Macaulayness, as well as prepare ground for the proofs.  For all undefined terminology we refer the reader to \cite{Lee96, Stanley96}.

A(n abstract)  \emph{simplicial complex} $\Delta$ on the ground set $V$ is a collection of subsets of $V$ that is closed under inclusion; $v$ is a {\em vertex} of $\Delta$ if $\{v\}\in \Delta$, but not all elements of V are required to be vertices. The elements of $\Delta$ are called {\em faces}. The \emph{dimension of a face} $\tau\in\Delta$ is $\dim\tau:=|\tau|-1$. The \emph{dimension of $\Delta$}, $\dim\Delta$, is the maximum dimension of its faces. A face of a simplicial complex $\Delta$ is a \textit{facet} if it is maximal w.r.t.~inclusion. We say that $\Delta$ is \emph{pure} if all facets of $\Delta$ have the same dimension. To simplify notation, for a face that is a vertex, we write $v$ instead of $\{v\}$; we also define the following two subcomplexes of $\Delta$ called the {\em star of $v$} and the {\em link of $v$ in $\Delta$}:
$\st_\Delta(v)=\st(v):=\{\sigma\in\Delta \ : \  \sigma\cup v\in\Delta\}$ and $\lk_\Delta(v)=\lk(v):= \{\sigma\in \st_\Delta(v) \ : \ v\notin\sigma\}$.

Let $\Delta$ be a $(d-1)$-dimensional simplicial complex. For $-1 \leq i \leq d-1$, the $i$-th \textit{$f$-number} of $\Delta$, $f_i = f_i(\Delta)$, denotes the number of $i$-dimensional faces of $\Delta$. The \textit{$h$-numbers} of $\Delta$, $h_i=h_i(\Delta)$ for $0 \leq i \leq d$, are defined by the relation $\sum_{i=0}^{d}h_i\lambda^{d-i}=\sum_{i=0}^{d}f_{i-1}(\lambda-1)^{d-i}$. Finally, the \textit{$g$-numbers} of $\Delta$ are $g_0(\Delta):=1$ and $g_i(\Delta):=h_i(\Delta)-h_{i-1}(\Delta)$ for $1 \leq i \leq d/2$.

Let $\Delta$ be a simplicial complex on the ground set $V$. Let $X = \{x_v : v \in V\}$ be the set of variables and let $\R[X]$ be the polynomial ring over the real numbers $\R$ in variables $X$. The \emph{Stanley--Reisner ideal} of $\Delta$ is defined as \[I_\Delta=\left(x_{v_1}x_{v_2}\dots x_{v_i}: \ \{v_1, v_2, \dots, v_i\}\notin \Delta\right),\] i.e.,
it is the ideal generated by the squarefree monomials corresponding to non-faces of $\Delta$. The \emph{Stanley--Reisner ring} of $\Delta$ is $\R[\Delta] := \R[X]/I_\Delta$. The ring $\R[\Delta]$ has an $\N$-grading: $\R[\Delta]= \bigoplus_{i=0}^{\infty} \R[\Delta]_i$, where the $i$th graded component $\R[\Delta]_i$ is the space of homogeneous elements of degree $i$ in $\R[\Delta]$. In general, for an $\N$-graded vector space $M$, denote by $M_i$ the $i$th graded component of~$M$.

Let $\Delta$ be a simplicial complex and let $\Theta=\theta_1,\ldots,\theta_\ell$ be a sequence of linear forms in $\R[X]$, where $\ell$ is a nonnegative integer. Denote the quotient $\R[\Delta]/\Theta\R[\Delta]$ by $\R(\Delta,\Theta)$. 

For our proofs, we will work in the dual setting of stress spaces developed by Lee \cite{Lee96}, see also \cite[Section 3]{Adiprasito-g-conj}. It should also be mentioned that stress spaces are essentially the same objects  as {\em inverse systems}  in commutative algebra --- the notion that goes back to Macaulay; see \cite[Theorem 21.6 and Exercise  21.7]{Eisenbud}. Observe that a variable $x_v$ acts on $\R[X]$ by $\frac{\partial}{\partial{x_v}}$; for brevity, we will denote this operator by $\partial_{x_v}$. More generally, if $c(X)=\sum_{v\in V} c_v x_v$ is a linear form in $\R[X]$, then we define \begin{equation*}
\begin{split}
\partial_{c(X)} : \R[X]&\to\R[X], \\
w&\mapsto \sum_{v\in V}c_v\cdot\partial_{x_v}w=\sum_{v\in V}c_v\frac{\partial w}{\partial{x_v}}.
\end{split}
\end{equation*}
	
For a monomial $\mu\in \R[X]$, the {\em support} of $\mu$ is $\supp(\mu)=\{v\in V : x_v\,|\,\mu\}$. A homogeneous polynomial $w\in\R[X]$ of degree $i$ is called an {\em $i$-stress} on $\Delta$ w.r.t.~$\Theta=\theta_1,\ldots,\theta_\ell$ if it satisfies the following conditions:
	\begin{itemize}
	\item Every term $\mu$ of $w$ is supported on a face of $\Delta$: $\supp(\mu)\in\Delta$, and
	\item $\partial_{\theta_k}w=0$ for all $k=1,\ldots,\ell$.
	\end{itemize}
The {\em support} of an $i$-stress $w$, $\supp (w)$, is the subcomplex of $\Delta$ generated by the support of all terms of $w$. We say that a face $F\in\Delta$ {\em participates in a stress $w$} if $F\in \supp(w)$. We also say that a stress $w$ {\em lives on a subcomplex $\Gamma$} of $\Delta$ if $\supp(w)\subseteq \Gamma$.

Denote the set of all $i$-stresses on $\Delta$ w.r.t.~$\Theta$ by $\Stress(\Delta, \Theta)_i$. 
This set is a vector space \cite{Adiprasito-g-conj,Lee96}; it is a subspace of $\R[X]$. In fact, $\Stress(\Delta, \Theta)_i$ is the orthogonal complement of $(I_\Delta+(\Theta))_i$ in $\R[X]_i$ w.r.t.~a certain inner product on $\R[X]_i$, see \cite[Section 3]{Lee96}. Thus, as a vector space, $\Stress(\Delta, \Theta)_i$ is canonically isomorphic to $\R(\Delta, \Theta)_i$.  (For an alternative approach using the Weil duality, see \cite[Section 3]{Adiprasito-g-conj}.) Another very useful and easy fact is that for every linear form $c(X)\in \R[X]$, the operator $\partial_{c(X)}$ maps  $\Stress(\Delta,\Theta)_i$ into $\Stress(\Delta, \Theta)_{i-1}$, that is, if $w$ is a stress, then so is $\partial_{c(X)}w$. This follows from the fact that $\partial_{\theta_k}$ and $\partial_{c(X)}$ commute, and that a subset of a face of $\Delta$ is a face of $\Delta$.

Stresses are convenient to work with for the following reason: if $\Gamma$ is a subcomplex of $\Delta$ (considered as a complex on the same ground set $V$ as $\Delta$), then there is a natural surjective homomorphism $\rho:\R[\Delta] \to \R[\Gamma]$; it induces a surjective homomorphism $\R(\Delta,\Theta) \to \R(\Gamma,\Theta)$. On the level of stress spaces, the situation is much easier to describe: $\Stress(\Gamma,\Theta)_i$ is a subspace of  $\Stress(\Delta,\Theta)_i$.

A simplicial complex $\Delta$ is \textit{centrally symmetric} or {\em cs} if its ground set is endowed with a {\em free involution} $\alpha: V \rightarrow V$ that induces a free involution on the set of all non-empty faces of $\Delta$. In more detail,  for all non-empty faces $\tau \in \Delta$, the following holds: $\alpha(\tau)\in\Delta$, $\alpha(\tau)\neq \tau$, and $\alpha(\alpha(\tau))=\tau$. To simplify notation, we write $\alpha(\tau)=-\tau$ and refer to $\tau$ and $-\tau$ as {\em antipodal faces} of $\Delta$. 

A large family of cs simplicial complexes is given by cs simplicial polytopes. A \emph{polytope} $P\subset \R^d$ is the convex hull of a set of finitely many points in $\R^d$. We will always assume that $P$ is $d$-dimensional. A \emph{proper face} of $P$ is the intersection of $P$ with a supporting hyperplane. A polytope $P$ is called {\em simplicial} if all of its proper faces are geometric simplices, i.e., convex hulls of affinely independent points. We identify each face of a simplicial polytope $P$ with the set of its vertices. The {\em boundary complex} of $P$, denoted $\partial P$, is then the simplicial complex consisting of the empty set along with the vertex sets of proper faces of $P$. A polytope $P$ is called {\em cs} if $P=-P$; in this case, the complex $\partial P$ is a cs simplicial complex w.r.t.~the natural involution. An important example is $\partial\C^*_d$ --- the boundary complex of a $d$-cross-polytope $\C^*_d:=\conv(\pm p_1,\pm p_2,\ldots, \pm p_d)$, where $p_1,\ldots,p_d$ are affinely independent points in $\R^d\backslash\{0\}$. As an abstract simplicial complex, $\partial\C^*_d$ is the $d$-fold suspension of $\{\emptyset\}$. It is easy to check that $h_j(\partial \C_d^*)=\binom{d}{j}$ for all $0\leq j\leq d$, and so $g_j(\partial \C_d^*)=\binom{d}{j}-\binom{d}{j-1}$ for all $1\leq j\leq d/2$.

The free involution $\alpha$ on a cs complex $\Delta$ induces the free involution on $X$ via $\alpha(x_v)=x_{-v}$, which in turn induces a $\Z/2\Z$-action on $\R[X]$ and $\R[\Delta]$. For any $\R$-vector space $W$ endowed with such an action $\alpha$, one has $W=W^+\oplus W^-$, where $W^+:=\{w\in W: w=\alpha(w)\}$ and $W^-:=\{w\in W: w=-\alpha(w)\}$. Thus,
$\R[\Delta]_i=\R[\Delta]^+_i\oplus \R[\Delta]^-_i$. As $\R[\Delta]^+_{i}\cdot\R[\Delta]^-_j \subseteq \R[\Delta]^{-}_{i+j}$, and similar inclusions hold for all choices of plus and minus signs, it follows that $\R[\Delta]$ has an $(\N\times \Z/2\Z)$-grading. 

Let $\Delta$ be a cs simplicial complex with an involution $\alpha$, and let $\Theta=\theta_1,\ldots,\theta_\ell$ consist of linear forms that are homogeneous w.r.t.~the $(\N\times \Z/2\Z)$-grading. Since $\alpha(I_\Delta+(\Theta))=I_\Delta+(\Theta)$ and since for any $w,w'\in \R[X]_i$, $\langle\alpha(w),\alpha(w')\rangle=\langle w,w'\rangle$, where $\langle-,-\rangle$ is the inner product from \cite[Section 3]{Lee96} used to define the isomorphism $\Phi_i$ between $\R(\Delta, \Theta)_i$ and $\Stress(\Delta, \Theta)_i$, it follows that $\alpha$ also acts on $\Stress(\Delta, \Theta)_i$ and that this action commutes with $\Phi_i$. Hence, $\Stress(\Delta, \Theta)_i=\Stress(\Delta, \Theta)_i^+ \oplus \Stress(\Delta, \Theta)_i^-$, where the subspaces $\Stress(\Delta, \Theta)^+_i$ and $\Stress(\Delta, \Theta)^-_i$ of $\Stress(\Delta,\Theta)_i$ are isomorphic (as vector spaces) to $\R(\Delta, \Theta)^+_i$ and $\R(\Delta, \Theta)^-_i$, resp. We refer to the elements of $\Stress(\Delta, \Theta)^+_i$ as {\em symmetric $i$-stresses}. 

For certain classes of simplicial complexes and a certain choice of $\Theta$, the dimensions of stress spaces are well understood. This requires a few additional definitions. 
Let $\Delta$ be a $(d-1)$-dimensional simplicial complex. A sequence $\Theta=\theta_1,\ldots,\theta_\ell$ of linear forms in $\R[X]$ is called  a \emph{linear system of parameters of $\Delta$} (or \emph{l.s.o.p.}, for short) if $\ell=d$ and $\R(\Delta,\Theta)$ is a finite-dimensional $\R$-vector space. We say that $\Delta$ is \emph{Cohen--Macaulay} (or CM, for short) if for some (equivalently, every) l.s.o.p. $\Theta=\theta_1, \theta_2, \dots, \theta_d$ of $\Delta$, $$\dim_\R \R(\Delta,\Theta)_i=h_i(\Delta), \quad \forall \ 0\leq i\leq d.$$ In particular, if $\Delta$ is CM and $\Theta$ is an l.s.o.p.~of $\Delta$, then $\Stress(\Delta, \Theta)_i$ has dimension $h_i(\Delta)$. Following \cite{Lee96}, when $\Theta$ is an l.s.o.p.~of $\Delta$, we will refer to elements of $\Stress(\Delta, \Theta)_i$ as {\em linear} $i$-stresses.

It is worth mentioning that there are other equivalent definitions of CM complexes. The most standard one is that $\Delta$ is CM if some (equivalently, every) l.s.o.p.~of $\Delta$ is a regular sequence for the $\R[X]$-module $\R[\Delta]$. It is also worth mentioning that CM complexes have a topological characterization due to Reisner \cite{Reisner}. This characterization implies, for instance, that CM complexes are pure, that stars and links of CM complexes are also CM, and that the boundary complexes of simplicial polytopes are CM.\footnote{For any field $\field$, one may analogously define the rings $\field[\Delta]$ and $\field(\Delta,\Theta)$ as well as the notion of $\Delta$ being CM over $\field$. However, it follows from Reisner's criterion along with the universal coefficient theorem that if $\Delta$ is CM over some field $\field$, then $\Delta$ is CM over $\R$, i.e., $\Delta$ satisfies the definition given above. In other words, no generality is lost by working over $\R$.}

Stanley \cite{Stanley-87} showed that if $\Delta$ is a cs simplicial complex, then there exists an l.s.o.p.~$\Theta=\theta_1,\ldots,\theta_d$ of $\Delta$ with the property that each $\theta_k$ lies in $\R[X]^-_1$. We refer to such $\Theta$ as {\em Stanley's special l.s.o.p.} of $\Delta$; this object plays a crucial role in the proof of Conjecture \ref{conj}. In the case that $\Delta=\partial P$ is the boundary complex of a cs $d$-polytope $P\subset \R^d$, there is a canonical choice of Stanley's special l.s.o.p.~$\theta_1, \dots, \theta_d$ of $\Delta$ defined as follows: for $k=1,\ldots,d$,
\begin{equation} \label{canonical-lsop}
\theta_k=\sum_{v\in V} a_{v,k}x_v, \mbox{ where $a_{v,k}$ is the $k$-th coordinate of vertex $v\in P\subset \R^d$}.
\end{equation}
To prove Conjecture \ref{conj2} we will consider stresses on $\partial P$ w.r.t.~$\widetilde{\Theta}=\theta_1,\ldots,\theta_d,\theta_{d+1}$, where $\theta_1,\ldots,\theta_d$ are defined  by \eqref{canonical-lsop} and $\theta_{d+1}:=\sum_{v\in V} x_v$ is an element of $\R[X]^+_1$. We will refer to  $\widetilde{\Theta}$ as the {\em set of canonical linear forms} associated with $P$. Following \cite{Lee96},  the $i$-stresses on $\partial P$ w.r.t.~$\widetilde{\Theta}$ are called {\em affine $i$-stresses}.

The two main results of \cite{Stanley-87} (see proofs of Theorems 3.1 and 4.1 there) are the following Lower Bound Theorems for cs CM complexes and cs simplicial polytopes. 
\begin{theorem}\label{thm: cs CM LBT} 
Let $\Delta$ be a $(d-1)$-dimensional cs CM simplicial complex, and let $\Theta$ be Stanley's special l.s.o.p.~of $\Delta$. Then $$\dim_\R \R(\Delta, \Theta)^-_i=\frac{1}{2}\left(h_i(\Delta)-\binom{d}{i}\right) \quad \mbox{ for all $1\leq i\leq d$}.$$ In particular, $h_i(\Delta)\geq \binom{d}{i}$ for all $1\leq i\leq d$. 

Furthermore, if $\Delta=\partial P$ for some cs simplicial polytope $P$ and $\widetilde{\Theta}$ is the set of canonical linear forms associated with $P$, then $$\dim_\R \R(\Delta, \widetilde{\Theta})^-_i=\frac{1}{2}\left(g_i(\Delta)-\binom{d}{i}+\binom{d}{i-1}\right) \quad \mbox{ for all $1\leq i\leq d/2$}.$$ In particular, $g_i(\Delta)\geq \binom{d}{i}-\binom{d}{i-1}$ for all $1\leq i\leq d/2$. 
\end{theorem}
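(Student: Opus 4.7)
The plan is to track the $\mathbb{Z}/2\mathbb{Z}$-equivariant Hilbert series of $\R(\Delta,\Theta)$, and then of $\R(\Delta,\widetilde{\Theta})$, by quotienting out the linear forms one at a time. The starting observation, valid for \emph{any} cs simplicial complex, is that
\[
\mathrm{Hilb}(\R[\Delta]^+,t)-\mathrm{Hilb}(\R[\Delta]^-,t)=1.
\]
Indeed, a monomial $\mu\in\R[\Delta]_i$ has support $F=\supp(\mu)\in\Delta$, and $\alpha(\mu)$ is supported on $-F$; since $\alpha$ acts freely on non-empty faces, $-F\neq F$ for $i\geq 1$, so $\alpha$ permutes the monomial basis of $\R[\Delta]_i$ without fixed points, forcing $\dim \R[\Delta]^+_i=\dim \R[\Delta]^-_i$ in each positive degree.

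For the first assertion, the CM hypothesis makes $\theta_1,\ldots,\theta_d$ a regular sequence on $\R[\Delta]$. Writing $\R_k:=\R[\Delta]/(\theta_1,\ldots,\theta_k)$, one obtains short exact sequences
\[
0\to (\R_{k-1})_{i-1}\xrightarrow{\cdot\theta_k}(\R_{k-1})_i\to (\R_k)_i\to 0.
\]
Because $\theta_k\in\R[X]^-_1$, multiplication by $\theta_k$ \emph{swaps} the $(\pm)$ components. Setting $P_k(t):=\mathrm{Hilb}(\R_k^+,t)$ and $Q_k(t):=\mathrm{Hilb}(\R_k^-,t)$, the sequences yield
\[
P_k+Q_k=(1-t)(P_{k-1}+Q_{k-1}),\qquad P_k-Q_k=(1+t)(P_{k-1}-Q_{k-1}).
\]
After $d$ iterations, the opening observation together with the CM identity $(1-t)^d \mathrm{Hilb}(\R[\Delta],t) = \sum_i h_i(\Delta)\, t^i$ gives $P_d+Q_d=\sum_i h_i(\Delta)\, t^i$ and $P_d-Q_d=(1+t)^d$. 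Subtracting and reading off the coefficient of $t^i$ yields $\dim \R(\Delta,\Theta)^-_i = \tfrac{1}{2}\bigl(h_i(\Delta)-\binom{d}{i}\bigr)$.

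For the polytope case, extend the quotient by $\theta_{d+1}\in\R[X]^+_1$. The short exact sequence
\[
0\to \R(\Delta,\Theta)_{i-1}\xrightarrow{\cdot\theta_{d+1}}\R(\Delta,\Theta)_i\to \R(\Delta,\widetilde{\Theta})_i\to 0
\]
is $\mathbb{Z}/2\mathbb{Z}$-equivariant; since $\theta_{d+1}$ is symmetric, multiplication by it \emph{preserves} the $(\pm)$-decomposition. Restricting to the $(-)$ part and plugging in the first assertion yields
\[
\dim \R(\Delta,\widetilde{\Theta})^-_i = \tfrac{1}{2}\bigl(h_i-\tbinom{d}{i}\bigr)-\tfrac{1}{2}\bigl(h_{i-1}-\tbinom{d}{i-1}\bigr) = \tfrac{1}{2}\bigl(g_i(\Delta)-\tbinom{d}{i}+\tbinom{d}{i-1}\bigr).
\]
The main obstacle is justifying left-exactness, i.e.~injectivity of multiplication by the specific (non-generic) form $\theta_{d+1}=\sum_v x_v$ on $\R(\Delta,\Theta)^-_{i-1}$ for $1\le i\le d/2$. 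This is a hard-Lefschetz-type statement and constitutes the genuine geometric input of the proof; it is deduced from Stanley's toric-variety proof of the classical $g$-theorem \cite{Stanley80}, applied with the special form $\theta_{d+1}$ arising from the central symmetry of $P$.
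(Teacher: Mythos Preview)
The paper does not give its own proof of this theorem; it simply attributes the result to Stanley \cite{Stanley-87} (proofs of Theorems~3.1 and~4.1 there). Your proposal is a faithful reconstruction of Stanley's argument: track the $(\N\times\Z/2\Z)$-graded Hilbert series through the regular sequence, using that the $\theta_k\in\R[X]^-_1$ swap the $(\pm)$ pieces, and then invoke hard Lefschetz for the final quotient by $\theta_{d+1}$.

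One clarification on the last step: the injectivity of $\cdot\,\theta_{d+1}$ on $\R(\Delta,\Theta)_{i-1}$ for $i\le d/2$ does not actually rely on the central symmetry of $P$. With the canonical l.s.o.p.\ \eqref{canonical-lsop}, the ring $\R(\partial P,\Theta)$ is the (rational) cohomology of the simplicial toric variety attached to the face fan of $P$; this fan is the normal fan of the polar $P^*$, and the ample class corresponding to $P^*$ is exactly $\sum_v h_{P^*}(v)\,x_v=\sum_v x_v=\theta_{d+1}$ (since $h_{P^*}(v)=1$ for every vertex $v$ of $P$). Hard Lefschetz for this ample class then gives the required injectivity for \emph{any} simplicial polytope with the origin in its interior. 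What central symmetry buys you is that $\theta_{d+1}\in\R[X]^+_1$, so the resulting short exact sequence respects the $(\pm)$-decomposition.
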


Using the language of stresses, Theorem \ref{thm: cs CM LBT} leads to the following:

\begin{corollary}\label{cor: minimal h_2 -> symmetry}
	Let $\Delta$ be a $(d-1)$-dimensional cs CM  simplicial complex, let $\Theta$ be Stanley's special l.s.o.p.~of $\Delta$, and let $1\leq i\leq d$ be an integer. Then  $h_{i}(\Delta)=\binom{d}{i}$ if and only if all linear $i$-stresses on $\Delta$ are symmetric, i.e., $\Stress(\Delta, \Theta)_{i}=\Stress(\Delta, \Theta)^+_{i}$. Furthermore, if $\Delta=\partial P$ for some cs simplicial polytope $P$, $\widetilde{\Theta}$ is the set of canonical linear forms associated with $P$, and $1\leq i\leq d/2$, then $g_i(\Delta)=\binom{d}{i}-\binom{d}{i-1}$ if and only if all affine $i$-stresses on $\Delta$ are symmetric, i.e., $\Stress(\Delta, \widetilde{\Theta})_{i}=\Stress(\Delta, \widetilde{\Theta})^+_{i}$.
\end{corollary}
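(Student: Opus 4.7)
The plan is to recognize the corollary as an immediate reformulation of Theorem~\ref{thm: cs CM LBT} in the language of stresses, using the $\alpha$-equivariant isomorphism between graded components of $\R(\Delta,\Theta)$ and $\Stress(\Delta,\Theta)$ already set up in Section~2. Recall that the discussion preceding Theorem~\ref{thm: cs CM LBT} established, for any cs simplicial complex $\Delta$ and any sequence $\Theta$ of linear forms homogeneous with respect to the $(\N\times \Z/2\Z)$-grading, the $\R$-vector space isomorphisms
\[
\Stress(\Delta,\Theta)^{+}_i \,\cong\, \R(\Delta,\Theta)^{+}_i \qquad \text{and} \qquad \Stress(\Delta,\Theta)^{-}_i \,\cong\, \R(\Delta,\Theta)^{-}_i.
\]

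For the first assertion, I would combine these isomorphisms with the first half of Theorem~\ref{thm: cs CM LBT} to obtain
\[
\dim_\R \Stress(\Delta,\Theta)^-_i \;=\; \dim_\R \R(\Delta,\Theta)^-_i \;=\; \tfrac{1}{2}\!\left(h_i(\Delta)-\tbinom{d}{i}\right).
\]
Thus $\Stress(\Delta,\Theta)^-_i=0$ if and only if $h_i(\Delta)=\binom{d}{i}$. Since $\Stress(\Delta,\Theta)_i=\Stress(\Delta,\Theta)^+_i\oplus \Stress(\Delta,\Theta)^-_i$, the vanishing of the minus component is equivalent to $\Stress(\Delta,\Theta)_i=\Stress(\Delta,\Theta)^+_i$, i.e.\ to every linear $i$-stress on $\Delta$ being symmetric.

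For the polytope case, the key point is that the extra form $\theta_{d+1}=\sum_{v\in V}x_v\in\R[X]^+_1$ is $\alpha$-invariant, so $\widetilde{\Theta}=\theta_1,\dots,\theta_{d+1}$ still consists of linear forms homogeneous with respect to the $(\N\times\Z/2\Z)$-grading. Hence the construction of Section~2 applies verbatim with $\Theta$ replaced by $\widetilde{\Theta}$, yielding the $\alpha$-equivariant isomorphism $\Stress(\partial P,\widetilde{\Theta})^{\pm}_i\cong \R(\partial P,\widetilde{\Theta})^{\pm}_i$. Combining this with the second half of Theorem~\ref{thm: cs CM LBT} then gives
\[
\dim_\R \Stress(\partial P,\widetilde{\Theta})^-_i \;=\; \tfrac{1}{2}\!\left(g_i(\partial P)-\tbinom{d}{i}+\tbinom{d}{i-1}\right),
\]
and the same dichotomy argument as above finishes the proof. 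There is no genuine obstacle in this corollary — the entire content is the translation between the $\R(\Delta,\Theta)$ side, where Theorem~\ref{thm: cs CM LBT} is naturally phrased, and the stress side, which is where the rest of the paper will operate. The only minor thing worth checking (and immediate from the definitions) is that the $\alpha$-equivariance of the isomorphism $\Phi_i$ carries over intact when a symmetric linear form is adjoined to $\Theta$.
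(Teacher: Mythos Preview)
Your proof is correct and follows essentially the same approach as the paper: both use the $\alpha$-equivariant isomorphism $\Stress(\Delta,\Theta)^-_i\cong\R(\Delta,\Theta)^-_i$ together with Theorem~\ref{thm: cs CM LBT} to conclude that the minus component of the stress space vanishes precisely when the relevant equality holds. The paper's version is simply terser, omitting the explicit remark about $\theta_{d+1}$ being $(\N\times\Z/2\Z)$-homogeneous.
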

\begin{proof}
Recall that $\R(\Delta,\Theta)^-_i \cong \Stress(\Delta,\Theta)^-_i$ and  $\R(\partial P, \widetilde{\Theta})^-_i \cong \Stress(\partial P, \widetilde{\Theta})^-_i$.	Theorem \ref{thm: cs CM LBT} then implies that $\Stress(\Delta, \Theta)^-_i=(0)$ if and only if $h_i(\Delta)=\binom{d}{i}$, and that $\Stress(\partial P,  \widetilde{\Theta})^-_i=(0)$ if and only if $g_i(\Delta)=\binom{d}{i}-\binom{d}{i-1}$.
\end{proof}


\section{Proof of the conjectures}
With the tools of Section 2 at our disposal, we are ready to prove Conjectures \ref{conj} and \ref{conj2}. In fact, we prove a more general result, Theorem \ref{main theorem prep}, from which the conjectures readily follow. To simplify notation, we assume that $V=\{\pm 1,\pm 2,\ldots,\pm n\}$ and let $[j]$ denote the set $\{1,2,\ldots,j\}$. We also refer to the elements of $\R[X]^+_i$ as {\em symmetric $i$-polynomials}.

We start with two simple lemmas.

\begin{lemma} \label{lm: support sttau}
	Let $\Delta$ be a cs simplicial complex and let $\Theta=\theta_1,\ldots,\theta_\ell$ be linear forms in $\R[X]$ that are homogeneous w.r.t.~the $(\N \times \Z/2\Z)$-grading. Let $v$ be a vertex of $\Delta$. If $w$ is a symmetric stress on $\Delta$ that lives on $\st(v)$, then, in fact, $w$ lives on $\lk(v)\cap\lk(-v)$.
\end{lemma}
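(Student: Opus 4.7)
The plan hinges on two elementary facts: symmetry of $w$ forces its support to be $\alpha$-invariant, and freeness of the involution prevents $\{v,-v\}$ from being a face of $\Delta$.

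First, I would observe that the set of monomials appearing in a symmetric stress is closed under $\alpha$. Writing $w=\sum_\mu c_\mu \mu$ as a sum over distinct monomials, the equality $\alpha(w)=w$ together with the fact that no monomial whose support lies in $\Delta$ can be $\alpha$-fixed (a fixed support would contain some antipodal pair $\{v,-v\}$, and freeness of $\alpha$ on non-empty faces forbids $\{v,-v\}\in\Delta$) yields $c_\mu=c_{\alpha(\mu)}$ for every $\mu$. In particular $\supp(w)=\alpha(\supp(w))$. Combining this with the hypothesis $\supp(w)\subseteq \st(v)$ and applying $\alpha$, which sends $\st(v)$ to $\st(-v)$, gives
\[
\supp(w)\;\subseteq\;\st(v)\cap\st(-v).
\]

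The second step is to identify $\st(v)\cap\st(-v)$ with $\lk(v)\cap\lk(-v)$. The inclusion $\supseteq$ is immediate. For $\subseteq$, suppose $\sigma\in\st(v)\cap\st(-v)$ and assume toward contradiction that $v\in\sigma$. Then $\sigma\cup\{-v\}\in\Delta$ is a face containing both $v$ and $-v$, forcing $\{v,-v\}\in\Delta$; but $\alpha$ fixes $\{v,-v\}$ setwise, contradicting freeness of $\alpha$ on non-empty faces. Hence $v\notin\sigma$, and by the symmetric argument $-v\notin\sigma$, so $\sigma\in\lk(v)\cap\lk(-v)$. Chaining the two inclusions finishes the proof.

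The argument is essentially formal; the only subtlety, which I regard as minor but worth spelling out, is verifying that no monomial $\mu$ appearing in a stress satisfies $\mu=\alpha(\mu)$. This is exactly the point where the freeness hypothesis is used at the level of monomials, and it lets me conclude that $\alpha$-invariance of $w$ propagates to $\alpha$-invariance of $\supp(w)$.
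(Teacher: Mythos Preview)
Your proof is correct and takes essentially the same approach as the paper's: both exploit that symmetry of $w$ forces $\supp(w)$ to be $\alpha$-invariant and that freeness of $\alpha$ forbids $\{v,-v\}\in\Delta$, differing only in the order of the two reductions (the paper first passes from $\st(v)$ to $\lk(v)$, then to $\lk(v)\cap\lk(-v)$, whereas you first pass to $\st(v)\cap\st(-v)$ and then identify this with $\lk(v)\cap\lk(-v)$). One small remark: your digression about no monomial being $\alpha$-fixed is unnecessary, since $c_\mu=c_{\alpha(\mu)}$ follows directly from $\alpha(w)=w$ by comparing coefficients of distinct monomials, regardless of whether any monomial is fixed.
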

\begin{proof} By the definition of cs complexes, $-v\notin \st(v)$. Thus the assumption that $w$ is symmetric and lives on $\st(v)$ implies that $w$ lives on $\lk(v)$. Now, since $w$ is symmetric, a face $F$ of $\Delta$ participates in $w$ if and only if $-F$ does. This together with the symmetry of $\Delta$  yields that $w$ lives on $\lk(v)\cap\lk(-v)$.
\end{proof}

\begin{lemma} \label{lm: squarefree}
Let $\Delta$ be a cs simplicial complex, let $\Theta=\theta_1,\ldots,\theta_\ell$ be linear forms in $\R[X]$ that are homogeneous w.r.t.~the $(\N \times \Z/2\Z)$-grading, and let $w\in \mathcal{S}(\Delta, \Theta)_i$. If for every vertex $v$, $\partial_{x_v} w$ is a symmetric stress, then $w$ is a squarefree polynomial.
\end{lemma}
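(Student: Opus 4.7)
The approach is contradiction. Suppose some monomial $\mu$ of $w$ is divisible by $x_v^2$ for some vertex $v\in V$; I will derive a contradiction by showing that $\partial_{x_v} w$ must both contain and fail to contain a monomial divisible by $x_{-v}$.

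The ``fails to contain'' half is the support constraint. Every monomial of $w$ is supported on a face of $\Delta$, and since $\Delta$ is centrally symmetric with a free involution, no face contains both $v$ and $-v$. Hence no monomial of $w$ is divisible by $x_v x_{-v}$, and since $\partial_{x_v}$ only strips a single factor of $x_v$ from each monomial it touches, no monomial of $\partial_{x_v} w$ is divisible by $x_{-v}$.

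The ``must contain'' half comes from the symmetry hypothesis. The operator $\partial_{x_v}$ sends $x_v^{a}\eta$ (with $x_v\nmid \eta$) to $a\cdot x_v^{a-1}\eta$, so on monomials divisible by $x_v$ it is injective up to nonzero scalar and preserves the stratification by $x_v$-exponent. Consequently the monomials of $w$ of $x_v$-exponent at least $2$ contribute collectively a nonzero polynomial to $\partial_{x_v} w$ all of whose monomials remain divisible by $x_v$; pick any such $\nu$. Then $\alpha(\nu)$ is divisible by $x_{-v}$, and the symmetry of $\partial_{x_v} w$ forces $\alpha(\nu)$ to appear in $\partial_{x_v} w$ with the same nonzero coefficient as $\nu$, contradicting the previous paragraph. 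Hence $w$ is squarefree. The only minor point to verify is that distinct $x_v$-strata of $w$ cannot cancel each other out under $\partial_{x_v}$, which is immediate since $\partial_{x_v}$ preserves the $x_v$-degree stratification; I do not anticipate any other obstacle.
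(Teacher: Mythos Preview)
Your proof is correct and is essentially the same argument as the paper's, just unpacked: the paper observes that $\partial_{x_v}w$ is a symmetric stress living on $\st(v)$ and then invokes Lemma~\ref{lm: support sttau} to conclude it lives on $\lk(v)$ (hence no term of $w$ is divisible by $x_v^2$), whereas you inline the content of that lemma by directly pairing a hypothetical monomial $\nu\mid x_v$ of $\partial_{x_v}w$ with its forbidden antipode $\alpha(\nu)\mid x_{-v}$. The underlying mechanism---$\{v,-v\}\notin\Delta$ together with the symmetry of $\partial_{x_v}w$---is identical.
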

\begin{proof}
 If $v$ is in the support of $w$, then $\partial_{x_v}w$ is a symmetric stress that lives on $\st(v)$. Hence by Lemma \ref{lm: support sttau}, $\partial_{x_v}w$ lives on $\lk(v)$. In particular, no term of $w$ is divisible by $x_v^2$.
\end{proof}

The following two lemmas provide key ingredients for the proof of Theorem \ref{main theorem prep}. For $k\in [n]$, we let $y_k$ denote $x_k+x_{-k}$.

\begin{lemma}\label{lm: symmetric stress expression}
Let $w \in \R[X]_i$ be a squarefree symmetric polynomial such that $\partial_{x_v} w$ is symmetric for all vertices $v$. Then $w$ is a squarefree polynomial in $y_1,\ldots,y_n$, that is, $w$ can be written as $$w=\sum_{\substack{\tau\subseteq [n]\\|\tau|=i}} c_\tau \prod_{k\in \tau} (x_k+x_{-k}) \quad \mbox{for some $c_\tau\in \R$}.$$ 
\end{lemma}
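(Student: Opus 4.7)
My plan is to translate the two hypotheses into explicit linear conditions on the coefficients of $w$ in the squarefree monomial basis, and then read off the stated presentation directly.

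First, I would reformulate the derivative hypothesis. A direct check on monomials, using $\alpha(x_u)=x_{-u}$ and the multiplicativity of $\alpha$, yields the identity $\alpha(\partial_{x_v} f)=\partial_{x_{-v}}\alpha(f)$ for every $f\in\R[X]$ and every $v\in V$. Applying this with $f=w$ and using $\alpha(w)=w$ gives $\alpha(\partial_{x_v}w)=\partial_{x_{-v}}w$, so the assertion ``$\partial_{x_v} w$ is symmetric for every $v$'' is equivalent to the equation
\[
\partial_{x_v} w \;=\; \partial_{x_{-v}} w \qquad \text{for every } v\in V,
\]
which is the form I will exploit.

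Since $w$ is squarefree of degree $i$, write $w=\sum_{S} c_S\, x^S$ with $S$ running over $i$-element subsets of $V$ and $x^S:=\prod_{v\in S}x_v$. For any squarefree monomial $x^T$ of degree $i-1$, the coefficient of $x^T$ in $\partial_{x_v} w$ equals $c_{T\cup\{v\}}$ when $v\notin T$ and $0$ otherwise, and analogously for $\partial_{x_{-v}} w$. Comparing these coefficients in the four cases determined by whether $v$ and $-v$ belong to $T$ produces two conclusions. The ``asymmetric'' case $v\in T$, $-v\notin T$ (and its mirror) forces $c_{T\cup\{-v\}}=0$, so $c_S=0$ whenever $S$ contains an antipodal pair. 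The case $v,-v\notin T$ gives $c_{T\cup\{v\}}=c_{T\cup\{-v\}}$.

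Combining these, the only $S$ with $c_S\ne 0$ have the form $S=\{\epsilon(k)\,k:k\in\tau\}$ for some $\tau\subseteq[n]$ with $|\tau|=i$ and some sign function $\epsilon:\tau\to\{\pm1\}$, and iterated use of the second relation shows $c_S$ depends only on $\tau$. Writing $c_\tau:=c_S$ and regrouping,
\[
w \;=\; \sum_{\substack{\tau\subseteq[n]\\|\tau|=i}} c_\tau \sum_{\epsilon:\tau\to\{\pm1\}} \prod_{k\in\tau} x_{\epsilon(k)\,k} \;=\; \sum_{\substack{\tau\subseteq[n]\\|\tau|=i}} c_\tau \prod_{k\in\tau}(x_k+x_{-k}),
\]
which is exactly the claimed expression. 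The only step of genuine substance is the commutation identity between $\alpha$ and $\partial_{x_v}$ in the reformulation; everything else is coefficient bookkeeping, so I do not anticipate any real obstacle.
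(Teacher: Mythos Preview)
Your proof is correct and follows essentially the same approach as the paper: both arguments use the commutation identity $\alpha\circ\partial_{x_v}=\partial_{x_{-v}}\circ\alpha$ together with the symmetry of $w$ and of $\partial_{x_v}w$ to deduce $\partial_{x_v}w=\partial_{x_{-v}}w$ for every $v$, and then conclude that $w$ is a squarefree polynomial in the $y_k$. The only cosmetic difference is that the paper phrases the last step as an easy induction-on-$n$ characterization, whereas you spell it out as a direct coefficient comparison in the squarefree monomial basis.
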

\begin{proof}
 It is easy to prove by induction on $n$ that a squarefree polynomial $Q\in\R[X]$ is a  polynomial in $y_1,\ldots,y_n$ if and only if $\partial_{x_{k}}Q=\partial_{x_{-k}}Q$ for all $k\in [n]$. Thus 
to prove  the lemma, it is enough to check that our given $w$ satisfies $\partial_{x_{k}}w=\partial_{x_{-k}}w$ for all $k\in [n]$. Indeed, by symmetry of $w$ and $\partial_{x_k} w$, and by the definition of $\alpha$,
\[ 
\partial_{x_{k}}w=\alpha(\partial_{x_{k}}w)=\partial_{x_{-k}} (\alpha w)=\partial_{x_{-k}} w.
\]
The result follows.
\end{proof}

\begin{lemma}\label{lm: i-stress to i+1-stress}
Let $i\geq 1$ and let $w\in \R[X]_{i+1}$ be a squarefree polynomial such that for all vertices $v$, $\partial_{x_v} w$ is a polynomial in $y_1,\ldots,y_n$. Then $w$ is a squarefree polynomial in $y_1,\ldots,y_n$. In particular, $w$ is symmetric and can be expressed as $$w=\sum_{\substack{\sigma\subseteq [n]\\|\sigma|=i+1}} c_\sigma \prod_{k\in \sigma} (x_k+x_{-k}) \quad \mbox{for some }c_\sigma\in\R.$$
\end{lemma}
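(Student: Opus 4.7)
The plan is to reduce the lemma to the criterion established inside the proof of Lemma~\ref{lm: symmetric stress expression}, namely that a squarefree polynomial $Q\in\R[X]$ belongs to $\R[y_1,\ldots,y_n]$ if and only if $\partial_{x_k}Q=\partial_{x_{-k}}Q$ for every $k\in[n]$. Since $w$ is given to be squarefree, it will suffice to verify these equalities and then translate ``polynomial in the $y_k$'s'' into the stated squarefree-in-$y_k$'s expansion.

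For the main step, I would fix $k\in[n]$ and introduce the difference $g_k:=\partial_{x_k}w-\partial_{x_{-k}}w$, which is homogeneous of degree $i\geq 1$. The idea is to show that every first partial $\partial_{x_v}g_k$ vanishes. Indeed, by hypothesis each $\partial_{x_v}w$ lies in $\R[y_1,\ldots,y_n]$, and since $\partial_{x_k}y_j=\partial_{x_{-k}}y_j$ for every $j$, the operators $\partial_{x_k}$ and $\partial_{x_{-k}}$ agree on $\R[y_1,\ldots,y_n]$; thus $\partial_{x_k}(\partial_{x_v}w)=\partial_{x_{-k}}(\partial_{x_v}w)$, and commuting partials gives $\partial_{x_v}g_k=0$. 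A homogeneous polynomial of positive degree annihilated by every first-order partial must vanish, so $g_k=0$; this is where the assumption $i\geq 1$ enters essentially.

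With $\partial_{x_k}w=\partial_{x_{-k}}w$ in hand for every $k$, the criterion above places $w$ in $\R[y_1,\ldots,y_n]$. To finish, I would expand $w=\sum c_{(a_1,\ldots,a_n)}\prod_j y_j^{a_j}$ and observe that the $x$-monomial $\prod_j x_j^{a_j}$ appears in $w$ with coefficient exactly $c_{(a_1,\ldots,a_n)}$, since no other $y$-monomial can contribute to it. Squarefreeness of $w$ thus forces each $a_j\in\{0,1\}$, and combining with $\deg w=i+1$ yields the asserted formula; the symmetry of $w$ is then automatic because each $y_k$ is symmetric. I do not foresee a serious obstacle here; the only point requiring a bit of care is remembering to use $i\geq 1$ so that the vanishing of all partials of $g_k$ forces $g_k=0$ rather than merely forcing $g_k$ to be a constant.
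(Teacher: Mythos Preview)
Your argument is correct. Both you and the paper ultimately rely on the criterion from the proof of Lemma~\ref{lm: symmetric stress expression} that a squarefree $Q$ lies in $\R[y_1,\ldots,y_n]$ iff $\partial_{x_k}Q=\partial_{x_{-k}}Q$ for all $k$, but you reach it along a different path. The paper first shows that $w$ is symmetric by a coefficient comparison (taking $i$ successive partials to reduce to degree~$1$ and flipping one sign at a time), and only then invokes Lemma~\ref{lm: symmetric stress expression}, whose hypotheses require both $w$ and its partials to be symmetric. You instead verify the criterion directly: setting $g_k=\partial_{x_k}w-\partial_{x_{-k}}w$ and using that $\partial_{x_k}$ and $\partial_{x_{-k}}$ agree on $\R[y_1,\ldots,y_n]$, you obtain $\partial_{x_v}g_k=0$ for every $v$, whence $g_k=0$ by homogeneity (this is exactly where $i\geq 1$ is used, via Euler's identity). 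This sidesteps the need to establish symmetry of $w$ as a separate step; symmetry then falls out for free once $w\in\R[y_1,\ldots,y_n]$. Your final passage from ``polynomial in the $y_k$'' to ``squarefree in the $y_k$'' via the observation that the coefficient of $\prod_j x_j^{a_j}$ in $w$ equals $c_{(a_1,\ldots,a_n)}$ is also fine and is a detail the paper leaves implicit.
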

\begin{proof}
By Lemma \ref{lm: symmetric stress expression}, the statement will follow if we show that $w$ is symmetric. To check this, write $w$ as $w=\sum c_{k_1,k_2,\ldots,k_{i+1}} x_{k_1}x_{k_2}\cdots x_{k_{i+1}}$ for some $c_{k_1,k_2,\ldots,k_{i+1}}\in \R$. The assumption that partial derivatives of $w$ are polynomials in $y_1,\ldots,y_n$ implies that $\partial_{x_{k_2}}\cdots \partial_{x_{k_{i+1}}} w$ is symmetric. Hence $c_{k_1,k_2,\ldots,k_{i+1}}= c_{-k_1,k_2,\ldots,k_{i+1}}$ (as they are coefficients of $x_{k_1}$ and $x_{-k_1}$ in $\partial_{x_{k_2}}\cdots \partial_{x_{k_{i+1}}} w$). Repeated applications of this argument imply that $c_{k_1,k_2,\ldots,k_{i+1}}= c_{-k_1,-k_2,\ldots,-k_{i+1}}$. Thus, $w$ is symmetric.
\end{proof}

We are now in a position to state and prove our main result.

\begin{theorem}\label{main theorem prep}
	Let $\Delta$ be a cs complex, and let $\Theta=\theta_1,\ldots,\theta_\ell$ be linear forms such that $\theta_1,\ldots,\theta_{\ell-1}$ are elements of $\R[X]^-_1$, and $\theta_\ell$ is either also in $\R[X]^-_1$ or $\theta_\ell=\sum_{v\in V} x_v$. If for some integer $i>1$, all $i$-stresses on $\Delta$ w.r.t.~$\Theta$ are symmetric, i.e., $\Stress(\Delta,\Theta)_i=\Stress(\Delta,\Theta)^+_i$, then for all $j\geq i$, $\Stress(\Delta,\Theta)_{j}=\Stress(\Delta,\Theta)^+_{j}$. Furthermore, if $\Stress(\Delta,\Theta)_{j}\neq (0)$ for some $j> i$, then $\Delta$ contains the boundary complex of the $j$-cross-polytope as a subcomplex.
\end{theorem}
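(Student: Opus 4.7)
The plan is to prove the theorem by strong induction on $j \geq i$, strengthening the inductive statement so that for every $j > i$, every $j$-stress is in fact a \emph{polynomial in $y_1,\ldots,y_n$}, where $y_k := x_k + x_{-k}$. This strengthening immediately subsumes both claims: if some coefficient $c_\tau$ of $w = \sum_\tau c_\tau \prod_{k\in\tau}y_k$ is nonzero with $|\tau|=j$, then expanding $\prod_{k\in\tau}(x_k+x_{-k})$ produces all $2^j$ sign patterns on $\{\pm k: k\in\tau\}$ as supports of monomials of $w$, each of which must be a face of $\Delta$; these $2^j$ facets are precisely those of $\partial\C^*_j$ on $\{\pm k: k\in\tau\}$, so $\partial\C^*_j \subseteq \Delta$. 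The base case $j=i$ of the induction is the hypothesis of the theorem.

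For the inductive step $j\to j+1$, let $w \in \Stress(\Delta,\Theta)_{j+1}$. Every partial $\partial_{x_v}w$ is a $j$-stress, hence symmetric by the inductive hypothesis, so Lemma~\ref{lm: squarefree} makes $w$ squarefree. Decompose $w = w^+ + w^-$ into the $\Z/2\Z$-isotypic components; both summands are squarefree stresses. Using the relation $\alpha\circ\partial_{x_v} = \partial_{x_{-v}}\circ\alpha$, the symmetry of $\partial_{x_v}w^+$ combined with symmetry of $w^+$ gives $(\partial_{x_v}-\partial_{x_{-v}})w^+ = 0$ for every $v$, so $w^+ \in \R[y_1,\ldots,y_n]$; similarly, symmetry of $\partial_{x_v}w^-$ combined with antisymmetry of $w^-$ gives $(\partial_{x_v}+\partial_{x_{-v}})w^- = 0$, so $w^- \in \R[z_1,\ldots,z_n]$ with $z_v := x_v-x_{-v}$.

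It remains to show $w^- = 0$. A parity check comes first: $w^-$ is antisymmetric and is a polynomial in the $\alpha$-antisymmetric $z_v$'s of degree $j+1$, so $(-1)^{j+1}=-1$, forcing $j$ to be even (otherwise $w^-=0$ automatically). For $j$ even with $j\geq i+1$, I use a second-partial argument: for $|u|\neq|v|$, a direct calculation using $w^-\in\R[z]$ yields $\partial_{x_u}\partial_{x_v}w^- = \epsilon_u\epsilon_v\,\partial_{z_{|u|}}\partial_{z_{|v|}}w^-$ (with $\epsilon_u=\mathrm{sign}(u)$), while $\partial_{x_u}\partial_{x_v}w^-=0$ when $|u|=|v|$ by squarefreeness. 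This is a $(j-1)$-stress that is polynomial in the $z$'s of odd degree $j-1$, hence antisymmetric; the strong inductive hypothesis (applicable since $j-1\geq i$) then forces all second partials of $w^-$ to vanish, so $\deg w^- \leq 1 < j+1$, and thus $w^- = 0$.

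The most delicate case is the very first inductive step, $j=i$ with $i$ even: the second-partial argument above needs symmetry of $(i-1)$-stresses, which is not guaranteed by the hypothesis. I expect this case to be handled by a refined application of Lemma~\ref{lm: symmetric stress expression} to the symmetric squarefree $i$-stress $\partial_{x_v}w^-\in\R[z]$, after using the antisymmetric stress conditions $\sum_{v>0}a_v^{(k)}\partial_{z_v}w^- = 0$ to control its partials; this should force $\partial_{x_v}w^-$ to lie in $\R[y]\cap\R[z]=\R$, i.e.~to vanish for positive degree, so that $w^-=0$. This final base step — bridging the gap from level $i$ to level $i+1$ in the ``bad'' parity without assuming anything at level $i-1$ — is the main technical obstacle of the proof, and is where the hypothesis $i>1$ is essential.
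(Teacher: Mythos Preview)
Your decomposition $w=w^{+}+w^{-}$ with $w^{+}\in\R[y_1,\ldots,y_n]$ and $w^{-}\in\R[z_1,\ldots,z_n]$ is correct and elegant, and your parity argument kills $w^{-}$ whenever the degree $j+1$ is even. Your second-partial argument is also fine once $j\geq i+1$. But the crucial first step---passing from $j=i$ to $j=i+1$ when $i$ is even---is not proved; you say only that you ``expect'' it to work via Lemma~\ref{lm: symmetric stress expression} and some unspecified use of the stress equations. This is a genuine gap: to apply Lemma~\ref{lm: symmetric stress expression} to $\partial_{x_v}w^{-}$ you would need all of \emph{its} partials to be symmetric, and those are $(i-1)$-stresses, about which the hypothesis says nothing. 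The linear relations $\sum_{v>0}a_v^{(k)}\partial_{z_v}w^{-}=0$ coming from the antisymmetric $\theta_k$'s give only $\ell$ conditions on the $\partial_{z_v}w^{-}$ and do not by themselves force $w^{-}=0$.

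The paper closes exactly this gap with an idea you are missing: instead of descending to degree $i-1$, it \emph{climbs back up} to degree $i$. Given an edge $\{u_1,u_2\}\in\supp(w)$, the paper shows (via Lemma~\ref{lm: support sttau}) that $\partial_{x_{u_2}}\partial_{x_{u_1}}w$ lives on $\lk(u_1)\cap\lk(-u_1)\cap\lk(u_2)\cap\lk(-u_2)$, so that
\[
w':=(x_{u_1}+x_{-u_1}-x_{u_2}-x_{-u_2})\cdot\partial_{x_{u_2}}\partial_{x_{u_1}}w
\]
is supported on $\Delta$. The hypothesis on $\Theta$ is used precisely here: each $\partial_{\theta_k}$ annihilates the symmetric linear form $x_{u_1}+x_{-u_1}-x_{u_2}-x_{-u_2}$ (this is why $\theta_\ell$ is required to be either in $\R[X]^-_1$ or equal to $\sum_v x_v$), so $w'$ is an honest $i$-stress, hence symmetric. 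Since $\R[X]$ is a domain and the linear factor is symmetric, cancelling it forces $\partial_{x_{u_2}}\partial_{x_{u_1}}w$ to be symmetric. Now Lemma~\ref{lm: symmetric stress expression} applies to each $\partial_{x_{u_1}}w$, and then Lemma~\ref{lm: i-stress to i+1-stress} finishes. Note that the paper works with $w$ directly and never needs the $w^{\pm}$ decomposition or the parity split; the multiplication trick handles all cases uniformly, and this is where both $i>1$ and the specific form of $\Theta$ genuinely enter.
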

\begin{proof}
	It suffices to prove the statement for $j=i+1$. Let $w\in \mathcal{S}(\Delta, \Theta)_{i+1}$. For every vertex $v$, $\partial_{x_v} w\in \mathcal{S}(\Delta, \Theta)_i$, and so $\partial_{x_v} w$ is symmetric. Hence, by Lemma \ref{lm: squarefree}, $w$ is squarefree. 
	
	Consider an edge $\{u_1, u_2\}\in \supp(w)$. Then $\partial_{x_{u_1}} w$ is a symmetric $i$-stress that lives on $\st( u_1)$, and so by Lemma \ref{lm: support sttau}, it lives on $\lk (u_1)\cap \lk (-u_1)$. Consequently, the stress $\partial_{x_{u_2}}\partial_{x_{u_1}} w$  lives on $\lk(u_1)\cap \lk(-u_1)$. Since $\partial_{x_{u_2}}\partial_{x_{u_1}} w=\partial_{x_{u_1}}\partial_{x_{u_2}} w$, the same argument implies that it also lives on $\lk(u_2)\cap \lk(-u_2)$. 
	Let $$w':=(x_{u_1}+x_{-u_1}-x_{u_2}-x_{-u_2})\cdot \partial_{x_{u_2}}\partial_{x_{u_1}} w.$$ Our discussion shows that $\supp(w')\subseteq \Delta$. Furthermore, by our assumptions on $\Theta$ and the fact that $w\in \mathcal{S}(\Delta, \Theta)_{i+1}$, it follows that $\partial_{\theta_k} w=0$ and $\partial_{\theta_k}(x_{u_1}+x_{-u_1}-x_{u_2}-x_{-u_2})=0$ for all $1\leq k\leq \ell$. Therefore, for all $1\leq k\leq \ell$,
	$$\partial_{\theta_k} w'=\partial_{\theta_k}(x_{u_1}+x_{-u_1}-x_{u_2}-x_{-u_2}) \cdot \partial_{x_{u_2}}\partial_{x_{u_1}} w+(x_{u_1}+x_{-u_1}-x_{u_2}-x_{-u_2}) \cdot\partial_{x_{u_2}}\partial_{x_{u_1}} \partial_{\theta_k}w=0.$$ Hence $w'\in \mathcal{S}(\Delta, \Theta)_i$, and so it is symmetric.  We conclude that $\partial_{x_{u_2}}\partial_{x_{u_1}} w\in \mathcal{S}(\Delta, \Theta)^+_{i-1}$ for any $u_2\in \supp(\partial_{x_{u_1}} w)$. Since the stress $\partial_{x_{u_1}}w$ itself is symmetric (indeed, it is an $i$-stress), Lemma \ref{lm: symmetric stress expression} guarantees that $\partial_{x_{u_1}}w$ is of the form $\partial_{x_{u_1}} w=\sum_{\tau\subseteq [n], \, |\tau|=i} c_\tau \prod_{k\in \tau} (x_k+x_{-k})$, for all $u_1\in\supp(w)$. It then follows from Lemma \ref{lm: i-stress to i+1-stress} that $w$ is a symmetric stress of the form $w=\sum_{\sigma\subseteq [n], \, |\sigma|=i+1} c_\sigma \prod_{k\in \sigma} (x_k+x_{-k})$. In particular, we see from the definition of stresses that if $w\neq 0$, then the support of $w$ is the union of the boundary complexes of $(i+1)$-cross-polytopes. This completes the proof.
\end{proof}

The proof of Conjectures \ref{conj} and \ref{conj2} now readily follows. In the proof, we use linear and affine stresses, i.e., stresses w.r.t.~Stanley's special l.s.o.p.~$\Theta$ and w.r.t.~the set of canonical linear forms $\widetilde{\Theta}$, respectively.

\begin{theorem}\label{main theorem} \qquad
\begin{enumerate}
\item 
Let $d$ and $1\leq i<d$ be integers. Let $\Delta$ be a cs CM complex of dimension $d-1$ with $h_i(\Delta)=\binom{d}{i}$. Then $h_j(\Delta)=\binom{d}{j}$ for all $i\leq j\leq d$. 

\item Let $d$ and $1\leq i<d/2$ be integers. If $\Delta=\partial P$ for some cs simplicial $d$-polytope $P$ and $g_i(\Delta)=\binom{d}{i}-\binom{d}{i-1}$, then $g_j(\Delta)=\binom{d}{j}-\binom{d}{j-1}$ for all $i\leq j\leq d/2$. 
\end{enumerate}
\end{theorem}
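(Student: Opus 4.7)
My plan is to derive Theorem \ref{main theorem} as a direct consequence of Theorem \ref{main theorem prep} together with Corollary \ref{cor: minimal h_2 -> symmetry}. The corollary supplies the dictionary between ``$h_i$ (resp.~$g_i$) attains its cross-polytope value'' and ``all $i$-stresses on $\Delta$ are symmetric,'' while Theorem \ref{main theorem prep} propagates symmetry of stresses upward in degree; stringing these together in both directions is what the proof will amount to.

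For Part 1, I would take $\Theta$ to be Stanley's special l.s.o.p.~of $\Delta$. Each $\theta_k$ lies in $\R[X]^-_1$, so $\Theta$ meets the hypotheses of Theorem \ref{main theorem prep} with $\ell = d$. Corollary \ref{cor: minimal h_2 -> symmetry} converts the hypothesis $h_i(\Delta) = \binom{d}{i}$ into the symmetry statement $\Stress(\Delta,\Theta)_i = \Stress(\Delta,\Theta)^+_i$. Theorem \ref{main theorem prep} then gives $\Stress(\Delta,\Theta)_j = \Stress(\Delta,\Theta)^+_j$ for every $j \ge i$, and a second application of the corollary yields $h_j(\Delta) = \binom{d}{j}$ for all $i \le j \le d$.

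Part 2 runs in complete parallel, using the canonical linear forms $\widetilde{\Theta} = \theta_1, \dots, \theta_d, \theta_{d+1}$ associated with $P$ in place of $\Theta$. Its first $d$ entries lie in $\R[X]^-_1$ and $\theta_{d+1} = \sum_{v\in V} x_v$, fitting precisely the second alternative allowed for $\theta_\ell$ in Theorem \ref{main theorem prep}. Corollary \ref{cor: minimal h_2 -> symmetry} applied to $g_i$ produces the degree-$i$ symmetry of affine stresses, Theorem \ref{main theorem prep} promotes it to all $j \ge i$, and the corollary translates back to $g_j(\partial P) = \binom{d}{j} - \binom{d}{j-1}$ for all $i \le j \le d/2$.

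The one wrinkle I anticipate is that Theorem \ref{main theorem prep} is stated with the strict inequality $i > 1$, whereas Theorem \ref{main theorem} allows $i = 1$. This is not a genuine obstacle: for Part 1, Stanley \cite{Stanley-87} already verified Conjecture \ref{conj} whenever $j$ is even or $j - i$ is even, and these two cases jointly cover every $j \ge 1$ when $i = 1$; the $i = 1$ case of Part 2 is immediate, as noted before Conjecture \ref{conj2}. Since all the genuine work has been done in the proof of Theorem \ref{main theorem prep} and the supporting stress lemmas, the present deduction is essentially bookkeeping with the stress-space dictionary.
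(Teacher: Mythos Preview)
Your argument for $i>1$ is identical to the paper's: both use Corollary \ref{cor: minimal h_2 -> symmetry} to translate the numerical hypothesis into symmetry of $i$-stresses, invoke Theorem \ref{main theorem prep} to propagate symmetry upward, and translate back.

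The only difference is the treatment of $i=1$. You dispatch Part 1 by citing Stanley's verification of Conjecture \ref{conj} for $j$ even or $j-i$ even (which indeed covers all $j$ when $i=1$) and Part 2 by citing the remark that the $i=1$ case is easy. The paper instead gives a short self-contained argument for both parts at once: $h_1(\Delta)=d$ (equivalently $g_1(\Delta)=d-1$) forces $f_0(\Delta)=2d$, so $\Delta$ sits inside $\partial\C^*_d$; then Stanley's monotonicity theorem \cite[Theorem 2.1]{Stanley-93} gives $h_j(\Delta)\le\binom{d}{j}$, which together with Theorem \ref{Stanley-LBT-v1} yields equality (and in fact $\Delta\cong\partial\C^*_d$). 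Your route is perfectly valid but leans on external references; the paper's is self-contained and has the side benefit of identifying $\Delta$ with the cross-polytope boundary in the $i=1$ case, a fact used later in the corollary.
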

\begin{proof}
	We begin with the case of $i>1$. For the first part, let $\Theta$ be Stanley's special l.s.o.p.~of $\Delta$. Since $h_i(\Delta)=\binom{d}{i}$, it follows from Corollary \ref{cor: minimal h_2 -> symmetry} that all linear $i$-stresses on $\Delta$ are symmetric. By Theorem \ref{main theorem prep}, all linear $j$-stresses (for any $j\geq i$) are also symmetric. Hence Corollary \ref{cor: minimal h_2 -> symmetry} yields the result. The proof of the second part is analogous: this time use $\widetilde{\Theta}$ --- the set of canonical linear forms associated with $P$ --- and then apply Corollary \ref{cor: minimal h_2 -> symmetry} and Theorem \ref{main theorem prep} to affine stresses.
	
	Next we deal with the case of $i=1$ in both parts. The assumption that $h_1(\Delta)=d$, or that $g_1(\Delta)=d-1$, is equivalent to $f_0(\Delta)=2d$. Now, it follows easily from the definition of cs complexes that any cs complex on $2d$ vertices is contained in the boundary complex of the $d$-cross-polytope, and so $\Delta\subseteq \partial \C^*_d$. Since $\Delta$ and $\partial \C^*_d$ are CM complexes of the same dimension, \cite[Theorem 2.1]{Stanley-93} implies that $h_j(\Delta)\leq h_j(\partial \C^*_d)=\binom{d}{j}$ for all $j$. On the other hand, according to Theorem \ref{Stanley-LBT-v1}, $h_j(\Delta)\geq \binom{d}{j}$ for all $j$. Thus we must have  $h_j(\Delta)=\binom{d}{j}$ for all $j$, and hence also $g_j(\Delta)=\binom{d}{j}-\binom{d}{j-1}$ for all $j$. (Moreover, that the two complexes $\Delta\subseteq \partial\C^*_d$ have the same $h$-numbers yields that they have the same $f$-numbers, and so, in fact, $\Delta\cong\partial\C^*_d$.)
\end{proof}

It is worth remarking that under the conditions of Theorem \ref{main theorem}, we can say a bit more about~$\Delta$:

\begin{corollary} \qquad
\begin{enumerate}
\item Let $\Delta$ be a $(d-1)$-dimensional cs CM simplicial complex with $h_i(\Delta)=\binom{d}{i}$ for some $1\leq i <d$. Then $\Delta$ contains a subcomplex $\Gamma$ isomorphic to $\partial\C^*_d$. Furthermore, $\Stress(\Delta, \Theta)_j=\Stress(\Gamma, \Theta)_j$ for all $j\geq i$, where $\Theta$ is Stanley's special l.s.o.p.~of $\Delta$. 
\item Let $\Delta=\partial P$ where $P$ is a cs simplicial $d$-polytope. If $g_i(\Delta)=\binom{d}{i}-\binom{d}{i-1}$ for some $1\leq i \leq (d-2)/2$, then $\Delta$ contains $\partial\C^*_{\lfloor d/2\rfloor}$ as a subcomplex.  
\end{enumerate}
\end{corollary}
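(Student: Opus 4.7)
The plan is to derive both parts from Theorem \ref{main theorem prep}, which tells us that, under the hypotheses of Conjectures \ref{conj} and \ref{conj2}, every $j$-stress for $j>i$ has the explicit form $\sum_\sigma c_\sigma \prod_{k \in \sigma}(x_k+x_{-k})$ and therefore lives on a union of boundary complexes of $j$-cross-polytopes sitting inside $\Delta$. The strategy in each part is to pick $j$ as large as the hypotheses permit while keeping the relevant stress space nonzero, then read off a cross-polytope from the support of any nonzero stress.

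For part 1, I would first dispose of $i=1$ via the argument in the proof of Theorem \ref{main theorem}, which already establishes $\Delta \cong \partial \C^*_d$; then take $\Gamma = \Delta$. For $i>1$, Theorem \ref{main theorem} yields $h_j(\Delta)=\binom{d}{j}$ for all $j\geq i$, and in particular $\dim \Stress(\Delta,\Theta)_d = h_d(\Delta) = 1$. A nonzero $d$-stress $w$ must therefore be of the form $c \prod_{k \in \sigma}(x_k+x_{-k})$ for a single $\sigma \subseteq [n]$ of size $d$: the products corresponding to distinct $\sigma$'s involve disjoint sets of squarefree monomials in their expansion and are linearly independent, so one-dimensionality of $\Stress(\Delta,\Theta)_d$ pins $\sigma$ down uniquely. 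Set $\Gamma := \supp(w)$, which by construction is isomorphic to $\partial \C^*_d$. For the stress-space equality $\Stress(\Delta,\Theta)_j = \Stress(\Gamma,\Theta)_j$ with $j \geq i$, the inclusion $\supseteq$ is automatic from $\Gamma \subseteq \Delta$, while the reverse is a dimension count: the ring $\R(\Gamma,\Theta)$ is a quotient of the finite-dimensional ring $\R(\Delta,\Theta)$ (since $\Gamma \subseteq \Delta$ gives a surjection $\R[\Delta]\twoheadrightarrow\R[\Gamma]$), so $\Theta$ remains an l.s.o.p.~of $\Gamma$; since $\Gamma \cong \partial \C^*_d$ is CM, $\dim\Stress(\Gamma,\Theta)_j = h_j(\Gamma) = \binom{d}{j} = h_j(\Delta)$, and equality is forced.

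For part 2, the case $i=1$ once again reduces to $\Delta \cong \partial \C^*_d$, which trivially contains $\partial \C^*_{\lfloor d/2 \rfloor}$. For $i>1$, set $j := \lfloor d/2 \rfloor$; the hypothesis $i \leq (d-2)/2$ guarantees $j > i$ in both parities of $d$. Theorem \ref{main theorem} applied to affine stresses yields $g_j(\Delta) = \binom{d}{j} - \binom{d}{j-1} > 0$, so $\Stress(\partial P,\widetilde{\Theta})_j \neq (0)$, and the ``Furthermore'' part of Theorem \ref{main theorem prep} then exhibits a nonzero $j$-stress whose support is a union of boundary complexes of $j$-cross-polytopes in $\Delta$; in particular, $\Delta$ contains $\partial \C^*_{\lfloor d/2 \rfloor}$ as a subcomplex.

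The only genuinely delicate point across both parts is the transfer of properties from $\Delta$ to the subcomplex $\Gamma$ in part 1: one needs $\Theta$ to act as an l.s.o.p.~on $\Gamma$ so that $\dim \Stress(\Gamma,\Theta)_j = h_j(\Gamma)$ can be read off the CM $h$-vector of $\partial \C^*_d$. This is the main obstacle to spelling out carefully, but it dissolves as soon as one recognizes that the relevant quotient ring for $\Gamma$ sits inside the corresponding (already finite-dimensional) quotient ring for $\Delta$.
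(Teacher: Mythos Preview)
Your proposal is correct and follows essentially the same route as the paper: handle $i=1$ by the $\Delta\cong\partial\C^*_d$ argument from Theorem~\ref{main theorem}, and for $i>1$ invoke Theorem~\ref{main theorem} to get a nonzero stress in the top relevant degree ($d$ for part~1, $\lfloor d/2\rfloor$ for part~2), then apply the ``Furthermore'' clause of Theorem~\ref{main theorem prep} to extract the cross-polytope, finishing part~1 with a dimension count. Your write-up is in fact more careful than the paper's on two points the paper leaves implicit: the uniqueness of $\sigma$ in the $d$-stress (forced by one-dimensionality and linear independence of the products), and the verification that $\Theta$ restricts to an l.s.o.p.\ of $\Gamma$ so that $\dim\Stress(\Gamma,\Theta)_j=h_j(\Gamma)$.
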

\begin{proof}
	If $i=1$, then the proof of Theorem \ref{main theorem} implies that in both parts $\Delta\cong\partial\C^*_d$. Thus assume that $i>1$. For the second statement, since by Theorem \ref{main theorem},	$g_{\lfloor d/2\rfloor}(\Delta)=\binom{d}{\lfloor d/2\rfloor}-\binom{d}{\lfloor d/2\rfloor-1}>0$, it follows that $\Stress(\Delta, \widetilde{\Theta})_{\lfloor d/2\rfloor}\neq (0)$, where $\widetilde{\Theta}$ is the set of canonical linear forms associated with $P$. Since by our assumptions, $\Stress(\Delta,\widetilde{\Theta})_i=\Stress(\Delta,\widetilde{\Theta})^+_i$ and $\lfloor d/2\rfloor >i$, Theorem \ref{main theorem prep} guarantees that $\Delta$ contains $\partial\C^*_{\lfloor d/2\rfloor}$ as a subcomplex.

The proof of the first statement is similar: since by Theorem \ref{main theorem}, $h_d(\Delta)=1$, there is a non-zero linear $d$-stress $w$ on $\Delta$. Since $d>i$ and $\Stress(\Delta,\Theta)_i=\Stress(\Delta,\Theta)^+_i$, Theorem \ref{main theorem prep} implies that $\Delta$  must contain $\Gamma\cong\partial\C^*_d$ as a subcomplex. Then $\Stress(\Delta, \Theta)_j\supseteq\Stress(\Gamma, \Theta)_j$ for all $j$, and comparing the dimensions we see that, in fact, $\Stress(\Delta, \Theta)_j=\Stress(\Gamma,\Theta)_j$ for all $j\geq i$.
\end{proof}

\subsection*{Acknowledgments} We are grateful to Satoshi Murai, Eran Nevo, Richard Stanley, and the two anonymous referees for comments on the previous versions of this note.
{\small
	\bibliography{refs}

\begin{thebibliography}{10}

\bibitem{Adiprasito-g-conj}
K.~Adiprasito.
\newblock Combinatorial {L}efschetz theorems beyond positivity.
\newblock arXiv:1812.10454.

\bibitem{BilleraLee}
L.~J. Billera and C.~W. Lee.
\newblock A proof of the sufficiency of {M}c{M}ullen's conditions for
  $f$-vectors of simplicial convex polytopes.
\newblock {\em J.~Combin.~Theory, Ser. A}, 31:237--255, 1981.

\bibitem{BjorFranklSta}
A.~Bj\"{o}rner, P.~Frankl, and R.~Stanley.
\newblock The number of faces of balanced {C}ohen-{M}acaulay complexes and a
  generalized {M}acaulay theorem.
\newblock {\em Combinatorica}, 7(1):23--34, 1987.

\bibitem{Eisenbud}
David Eisenbud.
\newblock {\em Commutative algebra}, volume 150 of {\em Graduate Texts in
  Mathematics}.
\newblock Springer-Verlag, New York, 1995.
\newblock With a view toward algebraic geometry.

\bibitem{Hochster}
M.~Hochster.
\newblock Cohen-{M}acaulay rings, combinatorics, and simplicial complexes.
\newblock In {\em Ring theory, {II} ({P}roc. {S}econd {C}onf., {U}niv.
  {O}klahoma, {N}orman, {O}kla., 1975)}, pages 171--223. Lecture Notes in Pure
  and Appl. Math., Vol. 26. Dekker, New York, 1977.

\bibitem{KNNZ}
S.~Klee, E.~Nevo, I.~Novik, and H.~Zheng.
\newblock A lower bound theorem for centrally symmetric simplicial polytopes.
\newblock {\em Discrete Comput. Geom.}, 61:541--561, 2019.

\bibitem{Lee96}
C.~W. Lee.
\newblock {P.L.}-spheres, convex polytopes, and stress.
\newblock {\em Discrete Comput. Geom.}, 15(4):389--421, 1996.

\bibitem{Reisner}
G.~A. Reisner.
\newblock Cohen-{M}acaulay quotients of polynomial rings.
\newblock {\em Adv. Math.}, 21(1):30--49, 1976.

\bibitem{Stanley75}
R.~P. Stanley.
\newblock The upper bound conjecture and {C}ohen-{M}acaulay rings.
\newblock {\em Studies in Applied Math.}, 54:135--142, 1975.

\bibitem{Stanley77}
R.~P. Stanley.
\newblock {C}ohen-{M}acaulay complexes.
\newblock In M.~Aigner, editor, {\em Higher Combinatorics}, pages 51--62.
  Reidel, Dordrecht and Boston, 1977.

\bibitem{Stanley79}
R.~P. Stanley.
\newblock Balanced {C}ohen-{M}acaulay complexes.
\newblock {\em Trans.~Amer.~Math.~Soc.}, 249:139--157, 1979.

\bibitem{Stanley80}
R.~P. Stanley.
\newblock The number of faces of a simplicial convex polytope.
\newblock {\em Adv.~Math.}, 35:236--238, 1980.

\bibitem{Stanley-87}
R.~P. Stanley.
\newblock On the number of faces of centrally-symmetric simplicial polytopes.
\newblock {\em Graphs Combin.}, 3:55--66, 1987.

\bibitem{Stanley-93}
R.~P. Stanley.
\newblock A monotonicity property of {$h$}-vectors and {$h^*$}-vectors.
\newblock {\em European J. Combin.}, 14(3):251--258, 1993.

\bibitem{Stanley96}
R.~P. Stanley.
\newblock {\em Combinatorics and Commutative Algebra}.
\newblock Progress in Mathematics. Birkh{\"a}user, Boston, Inc., Boston, MA,
  1996.
\newblock Second edition.

\end{thebibliography}
	\bibliographystyle{plain}
}
\end{document}